\documentclass[12pt,reqno]{article}

\usepackage{amsmath,amssymb,amsthm}

\usepackage{pictex}

\DeclareMathOperator{\Ext}{Ext}

\DeclareMathOperator{\dimv}{\textbf{dim}}
\newcommand{\MATRIX}[4]{\left[\begin{array}{cc}
#1 & #2 \\#3 & #4 \end{array}\right]} 

\newtheorem{Thm}{Theorem}[section]

\newtheorem{Cor}[Thm]{Corollary}
\newtheorem{Prop}[Thm]{Proposition}

\begin{document}
\title{Categorification of the Fibonacci Numbers\\ Using Representations of Quivers}
\author{Philipp Fahr\\
Brehmstr. 51\\
D-40239 D\"usseldorf, Germany\\
\texttt{philfahr@gmx.net}\\
\\Claus Michael Ringel \\ 
Fakult\"at f\"ur Mathematik, Universit\"at Bielefeld\\ POBox 100 131\\ D-33501 Bielefeld, Germany\\
\texttt{ringel@math.uni-bielefeld.de}}\date{}
\maketitle

\begin{abstract} In a previous paper we have presented a partition formula for the 
even-index Fibonacci numbers using the preprojective representations of the 
$3$-Kronecker quiver and its universal
cover, the $3$-regular star. Now we deal in a similar way with the 
odd-index Fibonacci numbers.
The Fibonacci modules introduced here provide a convenient categorification of 
the Fibonacci numbers.
\end{abstract}

\section{Introduction}

We consider the Fibonacci numbers with $f_0 = 0,$ $f_1 = 1$
and the recursion rule $f_{i+1} = f_i + f_{i-1}$ for $i \ge 1.$ As suggested by many authors,
one may use this recursion rule also for $i \le 0$ and one obtains in this way
Fibonacci numbers $f_i$ for all integral indices $i$ (Knuth \cite{Kn}  
calls this extended set the negaFibonacci numbers); the sequence $f_t$ with $-10\le t \le 10$
looks as follows:
$$
   -55,34,-21,13,-8,5,-3,2,-1,1,0,1,1,2,3,5,8,13,21,34,55; 
$$
in general, $f_{-t} = (-1)^{t+1} f_t.$ This rule, but also many other observations, show
that Fibonacci numbers have quite different properties depending whether the index is even
or odd, and the present report will strongly support this evidence. 

Our aim is to outline a categorification
of the Fibonacci numbers, or better of Fibonacci pairs, using the
representation theory of quivers. By definition, {\it Fibonacci pairs} are the pairs
of the form $[f_t,f_{t+2}]$ or $[f_t,f_{t-2}]$; such a pair will be 
called {\it even} or {\it odd} provided the
index $t$ of $f_t$ is even or odd, respectively. 

It is well-known that the dimension vectors of some indecomposable representations
of the $3$-Kronecker quiver $Q$ are Fibonacci pairs. In the previous note  \cite{FR},
preprojective and preinjective  representations were
used in order to derive a partition formula for the even-index Fibonacci numbers $f_{2n}$. 
In
the present paper we will exhibit a corresponding partition formula for the odd-index
Fibonacci numbers $f_{2n+1}$. Now 
regular representations of $Q$ play a role (see also \cite{C} and \cite{R}). Actually,   
we will be dealing directly with the universal cover of $Q$, thus with graded representations
of $Q$. The universal cover of $Q$ is the quiver $(T,E,\Omega)$, where $(T,E)$ is 
the 3-regular tree and $\Omega$ is a bipartite orientation. In
section 4 we will introduce some representations $P_t(x)$ and $R_t(x,y)$
of the quiver $(T,E,\Omega)$
which we label Fibonacci modules. The Fibonacci modules 
will be used in section 4 and 6
in order to categorify the Fibonacci pairs (and hence the Fibonacci numbers).
The dimension vectors of the Fibonacci modules are called
Fibonacci vectors, they are  studied already in secton 2 and 3 and provide 
the partition formula.

Categorification  of a set of numbers means to consider instead of
these numbers suitable objects in a category (here representations of quivers),
so that the numbers in question 
occur as invariants of the objects (see for example \cite{CY}). 
Equality of numbers
may be visualized  by isomorphisms of objects, functional relations by
functorial ties. We will see that certain addition formulas for Fibonacci numbers
can be interpreted very well by displaying  filtrations of Fibonocci modules. 

The partition formula for the odd-index Fibonacci numbers presented here
is due to Fahr, see \cite{F}, the remaining considerations are
based on discussions of the authors during the time when
Fahr was a PhD student at Bielefeld, the final version was
written by Ringel.

\section{Fibonacci Vectors}

Let us start with the $3$-regular tree $(T,E)$ with vertex set $T$ and edge set $E$.
We consider paths in this tree, thus sequences 
$x_0,x_1,\dots, x_t$ of elements $x_i$ of $T$ such that all the sets $\{x_{i-1},x_i\}$
belong to $E$, where $1\le i \le t$, and $x_{i+1} \neq x_{i-1}$ for $0 < i < t$;
if such a sequence exists, then we say that
$x_0$ and $x_t$ have distance $t$ and write $d(x_0,x_t) = t$. As usual, vertices with distance
$1$ are said to be neighbors. 

Let $K_0(T)$ be the free abelian group generated by the vertex set $T$, or better by elements $s(x)$
corresponding bijectively to the vertices $x\in T.$ 
The elements of $K_0(T)$ can be written as finite sums $a = \sum_{x\in T} a_xs(x)$ with $a_x \in \Bbb Z$
(and almost all $a_x = 0$), or just as $a = (a_x)_{x\in T}$; the set of vertices $x$ with $a_x \neq 0$ will
be called the support of $a$. 

We are going to single out suitable elements of $K_0(T)$
which we will call Fibonacci vectors: 
For every pair $(x,y)$ with $\{x,y\} \in E$, let
$r(x,y) = s(x)+s(y).$ 
The Fibonacci vectors will be  obtained from the elements $s(x)$ and $r(x,y)$ 
by applying sequences of reflections. 
For $y\in T$ the reflection
$\sigma^y: K_0(T) \to K_0(T)$ is defined 
as follows: For $a\in K_0(T)$, its image $\sigma^ya$ is given by
$(\sigma^ya)_x = a_x$ for $x\neq y,$ and $(\sigma^ya)_y = -a_y + \sum_{\{x,y\}\in E} a_x.$
Observe that the reflections $\sigma^y, \sigma^{y'}$ commute provided $y,y'$ are not neighbors.

We denote by $\Sigma^x$ the composition of all the reflections $\sigma^y$ with $d(x,y)$ 
being even (note that this composition is defined: first of all, since any element $a\in K_0(T)$
has finite support, the number of vertices $y$ with $\sigma^ya \neq a$ is finite; second, given two
different vertices $y,y'$ such that the distance between $x$ and $y$, as well as between $x$ and $y'$
is even, then $y,y'$ cannot be neighbors, thus $\sigma^y, \sigma^{y'}$ commute).
Similarly, we denote  by $\underline\Sigma^x$ the composition of all the reflections 
$\sigma^y$ with $d(x,y)$ being odd (again, this is well-defined).

We define $s_t(x)$ for all $t\ge 0,$ using induction:
First of all, $s_0(x) = s(x)$.
If $s_t(x)$ is already defined for some $t\ge 0$, let $s_{t+1}(x) = \underline\Sigma^xs_t(x)$ in case
$t$ is even, and  $s_{t+1}(x) = \Sigma^xs_t(x)$ in case
$t$ is odd, thus
 \begin{eqnarray*}
 s_0(x) &=& s(x), \\
 s_1(x) &=& \underline\Sigma^xs(x),\\
 s_2(x) &=& \Sigma^x\underline\Sigma^xs(x),\\
 s_3(x) &=& \underline\Sigma^x\Sigma^x\underline\Sigma^xs(x),\\
  &\dots&
\end{eqnarray*}

Similarly, we define $r_t(x,y)$ for all $t\ge 0$: 
First of all, $r_0(x,y) = r(x,y)$.
If $r_t(x,y)$ is already defined for some $t\ge 0$, let $r_{t+1}(x,y) = \underline\Sigma^xr_t(x,y)$ 
in case $t$ is even, and  $r_{t+1}(x,y) = \Sigma^xr_t(x,y)$ in case
$t$ is odd. The elements $s_t(x)$ with $t \ge 0$ as well as the elements  $r_t(x,y)$ 
with   $t\in\mathbb Z$ will be called the {\em Fibonacci vectors.}

Define
$$
 s_t(x)_- = \sum_{d(x,z)\not\equiv t} s_t(x)_z,\quad \text{and}\quad
 s_t(x)_+ = \sum_{d(x,z)\equiv t} s_t(x)_z,\quad 
$$
where $\equiv$ means equivalence modulo $2$, 
and similarly,
$$
 r_t(x,y)_- = \sum_{d(x,z)\not\equiv t} r_t(x,y)_z,\quad \text{and}\quad
 r_t(x,y)_+ = \sum_{d(x,z)\equiv t} r_t(x,y)_z. 
$$

\begin{Prop} 
{The Fibonacci vectors have non-negative coordinates and
$$
  s_t(x)_- = f_{2t}, \
  s_t(x)_+ = f_{2t+2},\ r_t(x,y)_- = f_{2t-1},\ r_t(x,y)_+ = f_{2t+1}.
$$
}
\end{Prop}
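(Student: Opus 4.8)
The plan is to reduce everything to how the two parity-sums of a vector behave under the operators $\Sigma^x$ and $\underline\Sigma^x$. For $a=(a_z)_{z\in T}\in K_0(T)$ set $E_x(a)=\sum_{d(x,z)\ \mathrm{even}}a_z$ and $O_x(a)=\sum_{d(x,z)\ \mathrm{odd}}a_z$ (finite sums). First I would establish the transformation rules
\[ E_x(\underline\Sigma^x a)=E_x(a),\qquad O_x(\underline\Sigma^x a)=3E_x(a)-O_x(a), \]
\[ O_x(\Sigma^x a)=O_x(a),\qquad E_x(\Sigma^x a)=3O_x(a)-E_x(a). \]
The point is a parity count in the tree: adjacent vertices have distances from $x$ of opposite parity and every vertex has exactly three neighbors, so each even-distance vertex has three odd-distance neighbors and conversely. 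Since $\underline\Sigma^x$ changes only the odd-distance coordinates, $E_x$ is unchanged, while $O_x(\underline\Sigma^x a)=-O_x(a)+\sum_{d(x,y)\ \mathrm{odd}}\sum_{\{w,y\}\in E}a_w$; regrouping the double sum by $w$ replaces it by $3\sum_{d(x,w)\ \mathrm{even}}a_w=3E_x(a)$. The two $\Sigma^x$-rules follow symmetrically (the vertex $x$ needs no special treatment, being an even vertex with three odd neighbors).

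Next I would read off the stated numbers. By definition $s_t(x)_-=O_x(s_t(x))$ and $s_t(x)_+=E_x(s_t(x))$ when $t$ is even, and the two are interchanged when $t$ is odd; likewise for $r_t(x,y)$. The base cases are immediate: $E_x(s_0(x))=1$, $O_x(s_0(x))=0$, and $E_x(r_0(x,y))=O_x(r_0(x,y))=1$ because $\{x,y\}\in E$. Now induct on $t$ in the two parity cases. For the step, the transformation rules say that the reflected parity-sum becomes $3\cdot(\text{other sum})-(\text{itself})$ while the other sum is carried unchanged. Under the proposed values this is exactly the identity $f_{m+2}=3f_m-f_{m-2}$ (valid for all integers $m$, being an immediate consequence of $f_{i+1}=f_i+f_{i-1}$) with the index advanced by $2$; hence $s_t(x)_-=f_{2t}$, $s_t(x)_+=f_{2t+2}$, $r_t(x,y)_-=f_{2t-1}$ and $r_t(x,y)_+=f_{2t+1}$ all propagate.

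It remains to prove non-negativity, which I expect to be the only real difficulty. For $s_t(x)$ I would use that the whole construction is equivariant under the stabilizer of $x$ in $\mathrm{Aut}(T,E)$, which acts transitively on each distance sphere; hence $s_t(x)_z$ depends only on $d(x,z)$, and the problem collapses to a single profile $(a_d)_{d\ge0}$ with $a'_d=-a_d+a_{d-1}+2a_{d+1}$ at the reflected distances (and $a'_0=-a_0+3a_1$). The dangerous term is $-a_d$, so I would carry along a domination invariant: after an even stage every odd-distance value is $\le$ both of its even-distance neighbours, and symmetrically after an odd stage (with the evident change at $x$). This makes each reflected coordinate $-a_d+a_{d-1}+2a_{d+1}\ge a_{d-1}-a_d\ge0$, and a short check shows the invariant reproduces itself, so non-negativity follows by induction from the already-computed base profiles.

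For $r_t(x,y)$ the same idea applies but with the stabilizer of the edge $\{x,y\}$, whose orbits give a two-branch profile: values $(b_d)$ on the $y$-side and $(c_d)$ on the other side, coupled through the reflections at $x$ and at $y$. The hard part is that the simplest invariant already fails for $r_0(x,y)$ (its value at $y$ is not dominated by $y$'s children), so the coupling between the branches has to be tracked by hand through the first step or two before the profile settles into a dominated shape; once a suitable two-branch invariant is in force, the induction runs as before. The formulas of the proposition, by contrast, fall out mechanically from the transformation rules, so I would present the parity-sum computation first and treat positivity as the technical heart of the argument.
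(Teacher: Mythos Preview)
Your derivation of the four Fibonacci identities is correct: the transformation rules $E_x(\underline\Sigma^x a)=E_x(a)$, $O_x(\underline\Sigma^x a)=3E_x(a)-O_x(a)$ (and their $\Sigma^x$ analogues) follow exactly from $3$-regularity as you argue, and the recursion $f_{m+2}=3f_m-f_{m-2}$ carries the induction. The paper gives no argument here at all---it simply cites \cite{FR} for $s_t$ and says $r_t$ is handled ``in the same way''---so on this half you supply more than the text does.

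Your approach to non-negativity, a self-reproducing domination invariant on the radial profile, is correct for $s_t(x)$, and the check that the invariant is inherited is exactly as you describe. For $r_t(x,y)$ you rightly observe that the naive invariant fails at the start (in fact it fails at both $r_0$ and $r_1$: in $r_1$ the value at $x$ exceeds the value at $y$); the two-branch invariant does stabilise from $r_2$ onward, but you only promise to handle the initial steps by hand, so that piece remains a plan rather than a proof. Here the paper has a structural shortcut you do not use: Section~4 realises the $s_t(x)$ and $r_t(x,y)$ as dimension vectors of genuine indecomposable representations built via the Bernstein--Gelfand--Ponomarev reflection functors, which makes non-negativity automatic and treats both families uniformly with no special pleading at the base. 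Your route is purely elementary and independent of representation theory; the price is exactly the ad hoc start for $r_t$ that you flag but do not carry out.
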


\begin{proof} The proof for $s_t$ has been given in the previous paper \cite{FR}.
In the same way, one
deals with $r_t(x,y).$ But note that in \cite{FR}
we have fixed some base point $x_0$, and we wrote $\Phi_0, \Phi_1$
instead of  $\Sigma^{x_0}$, $\underline\Sigma^{x_0}$ respectively. Also, there, we
have denoted the
function $s_{2t}(x_0)$ by $a_t$, and the functions $s_{2t+1}(x_0)$ by $\Phi_1a_t$.
\end{proof}

The functions $s_t(x_0)$ have been exhibited in \cite{FR}, for
$0 \le t \le 5.$ At the end of the present paper we present the
functions $r_t(x,y)$ for $0\le t \le 5.$ 
	        
\section{The Partition Formula}

In the previous paper \cite{FR} the functions $s_t(x)$ were used in order to provide
a partition formula for the even-index Fibonacci numbers (see also \cite{P, H}).
We now consider the functions
$r_t(x,y)$ in order to obtain a corresponding partition formula for the odd-index Fibonacci numbers. In order to get all odd-index Fibonacci numbers, 
it is sufficient to look at the functions $r_{2t}(x,y)$, since
$$
 f_{4t-1}   =  r_{2t}(x,y)_- , \quad
 f_{4t+1}   =  r_{2t}(x,y)_+ .
$$

For any $t$, the numbers $s_t(x)_z$ only
depend on the distance $d(x,z)$. In contrast, for $r_t(x,y)$ and $d(x,z) > 0,$ there are now
{\bf two} values which occur as $r_t(x,y)$ 
depending on whether the path connecting $x$ and $z$
runs through $y$ or not. Let us denote by $T_s(x,y)$ the set of vertices $z$ of $T$ with
$d(x,z) = s$ and such that the path connecting $x, z$ does not involve $y$, and 
$T'_s(x,y)$ shall denote the set of vertices $z$ of $T$ with
$d(x,z) = s,$ such that the path connecting $x, z$ does involve $y$.
Clearly, we have for $s \ge 1$
\begin{equation}
 |T_s(x,y)| = 2^{s}, \qquad |T'_s(x,y)| = 2^{s-1}.  \tag{1}
\end{equation}

The proof of $(1)$ is by induction: Of course,  $T'_1(x,y) = \{y\}$
whereas $T_1(x,y)$ consists of the two remaining neighbors of $x$. For any vertex $z$
with $d(x,z) = s > 0$, there are precisely two neighbors $z', z''$ such that $d(x,z') = d(x,z'') 
= s+1,$ therefore  $|T_{s+1}(x,y)| = 2|T_s(x,y)|$ and $|T_{s+1}'(x,y)| = 2|T_s'(x,y)|$.
  \medskip

Thus, let us look at the function $u_t:\Bbb Z \to \Bbb N$ defined for $s\ge 0$ by
$u_t[s] = r_{2t}(x,y)_z$ with $z\in T_s(x,y)$, and for $s \le -1$ by
$u_t[s] = r_{2t}(x,y)_z$ with $z\in T'_{-s}(x,y)$.
Using the equality (1), we see:
\begin{eqnarray*}
 f_{4t-1}   &=&  r_{2t}(x,y)_-  = \sum_{d(x,z) \text{ odd}} r_{2t}(x,y)_z \\
   &=& \sum_{s \ge 0 \text{ odd}} |T_s(x,y)|u_t[s] + 
       \sum_{s > 0 \text{ odd}} |T'_s(x,y)|u_t[-s] \\
   &=& \sum_{s \ge 0 \text{ odd}} 2^su_t[s] + 
       \sum_{s > 0 \text{ odd}} 2^{s-1}u_t[-s] \\
\\
 f_{4t+1}   &=&  r_{2t}(x,y)_+ = \sum_{d(x,z) \text{ even}} r_{2t}(x,y)_z \\
   &=& \sum_{s \ge 0 \text{ even}} |T_s(x,y)|u_t[s] + 
       \sum_{s > 0 \text{ even}} |T'_s(x,y)|u_t[-s] \\
   &=& \sum_{s \ge 0 \text{ even}} 2^su_t[s] + 
       \sum_{s > 0 \text{ even}} 2^{s-1}u_t[-s] 
\end{eqnarray*}

Without reference to $r_{2t}$, we can define the functions $u_t$ directly as follows:
We start with $u_0[0] = u_0[-1] = 1$ and $u_0[s] = 0$ for all other $s$.
If $u_t$ is already defined for some $t\ge 0$, then we define first $u_{t+1}[s]$ for
odd integers $s$ by the rule
$$
  u_{t+1}[s] = \left\{\begin{matrix} 2u_t[s-1]-u_t[s]+u_t[s+1] & \text{for} & s < 0 \cr
                       u_t[s-1]-u_t[s] + 2u_t[s+1] & \text{for} & s > 0 \end{matrix} \right.
$$
and in a second step for even integers $s$ by
$$
  u_{t+1}[s] = \left\{\begin{matrix} 2u_{t+1}[s-1]-u_t[s]+u_{t+1}[s+1] & \text{for} & s < 0 \cr
                       u_{t+1}[s-1]-u_t[s]+2u_{t+1}[s+1] & \text{for} & s \ge 0 \end{matrix} \right.
$$

Here is the table of the numbers $u_t[s]$, for  $t\le 4$; in addition, we list the corresponding
Fibonacci numbers $f_{-1}, f_1, f_3, \dots, f_{17}.$
$$
{\beginpicture
\setcoordinatesystem units <.7cm,.5cm>
\plot -3 9.7 16.5 9.7 /

\setdashes <1mm>

\plot -2.2 4.5  -2.2 10.8 /
\plot 14.0 4.5  14.0 10.8 /
 
\setsolid
\plot -3.1 10.8  -2.2 9.7 /
\put{$t$} at -3.2 10.2
\put{$s$} at -2.5 10.8

\put{$-6$} at -1 10.3
\put{$-5$} at 0 10.3
\put{$-4$} at 1 10.3
\put{$-3$} at 2 10.3
\put{$-2$} at 3 10.3
\put{$-1$} at 4 10.3
\put{$0$} at 5 10.3
\put{$1$} at 6 10.3
\put{$2$} at 7 10.3
\put{$3$} at 8 10.3
\put{$4$} at 9 10.3
\put{$5$} at 10 10.3
\put{$6$} at 11 10.3
\put{$7$} at 12 10.3
\put{$8$} at 13 10.3
\put{$\dots$} [r] at 14 10.3
\put{$\dots$} [r] at -1.3 10.3

\put{$f_{4t-1}$} at 14.7 10.3
\put{$f_{4t+1}$} at 16 10.3

\put{$1$} [r] at 15.2 9
\put{$1$} [r] at 16.7 9
\put{$2$} [r] at 15.2 8
\put{$5$} [r] at 16.7 8
\put{$13$} [r] at 15.2  7
\put{$34$} [r] at 16.7 7
\put{$89$} [r] at 15.2 6
\put{$233$} [r] at 16.7 6
\put{$610$} [r] at 15.2 5
\put{$1597$} [r] at 16.7 5

\put{$0$} [r] at -2.7 9
\put{$1$} [r] at 4.1 9
\put{$1$} [r] at 5.1 9

\put{$1$} [r] at -2.7 8
\put{$1$} [r] at 5.1 8
\put{$1$} [r] at 6.1 8
\put{$1$} [r] at 7.1 8

\put{$2$} [r] at -2.7 7
\put{$1$} [r] at 3.1 7
\put{$1$} [r] at 4.1 7
\put{$4$} [r] at 5.1 7
\put{$2$} [r] at 6.1 7
\put{$3$} [r] at 7.1 7
\put{$1$} [r] at 8.1 7
\put{$1$} [r] at 9.1 7

\put{$3$} [r] at -2.7 6
\put{$1$} [r] at 1.1 6
\put{$1$} [r] at 2.1 6
\put{$6$} [r] at 3.1 6
\put{$5$} [r] at 4.1 6
\put{$17$} [r] at 5.1 6
\put{$8$} [r] at 6.1 6
\put{$13$} [r] at 7.1 6
\put{$4$} [r] at 8.1 6
\put{$5$} [r] at 9.1 6
\put{$1$} [r] at 10.1 6
\put{$1$} [r] at 11.1 6
\put{$4$} [r] at -2.7 5
\put{$1$} [r] at -.9 5
\put{$1$} [r] at 0.1 5
\put{$8$} [r] at 1.1 5
\put{$7$} [r] at 2.1 5
\put{$32$} [r] at 3.1 5
\put{$24$} [r] at 4.1 5
\put{$77$} [r] at 5.1 5
\put{$35$} [r] at 6.1 5
\put{$60$} [r] at 7.1 5
\put{$19$} [r] at 8.1 5
\put{$26$} [r] at 9.1 5
\put{$6$} [r] at 10.1 5
\put{$7$} [r] at 11.1 5
\put{$1$} [r] at 12.1 5
\put{$1$} [r] at 13.1 5

 \endpicture}
$$

{\bf Remark.} 
We can reformulate the recursion rules using the generalized Cartan matrix $A = (a_{ij})_{ij}$
(indexed over the integers) with Dynkin diagram
$$
{\beginpicture
\setcoordinatesystem units <1.5cm,.7cm>
\multiput{$\circ$} at -3 0  -2 0  -1 0  0 0  1 0  2 0  3 0 /
\put{$-3$} at -3 0.5 
\put{$-2$} at -2 0.5 
\put{$-1$} at -1 0.5 
\put{$0$} at 0 0.5 
\put{$1$} at 1 0.5 
\put{$2$} at 2 0.5 
\put{$3$} at 3 0.5 
\plot -.8 0  -.2 0 /
\plot -2.8 .1  -2.2 .1 /
\plot -1.8 .1  -1.2 .1 /
\plot .8 .1  .2 .1 /
\plot 1.8 .1  1.2 .1 /
\plot 2.8 .1  2.2 .1 /
 
\plot -2.8 -.1  -2.2 -.1 /
\plot -1.8 -.1  -1.2 -.1 /
\plot .8 -.1  .2 -.1 /
\plot 1.8 -.1  1.2 -.1 /
\plot 2.8 -.1  2.2 -.1 /
\plot -2.6 .2 -2.4 0  -2.6 -.2 / 
\plot -1.6 .2 -1.4 0  -1.6 -.2 /
 \plot .6 .2  .4 0    .6 -.2 / 
\plot 1.6 .2  1.4 0  1.6 -.2 / 
\plot 2.6 .2  2.4 0  2.6 -.2 /

\multiput{$\dots$} at -4 0  4 0 /
\endpicture}
$$
One pair of vertices is simply laced, for the remaining laced pairs, 
one of the numbers $a_{ij}, a_{ji}$ is
equal to $-1$, the other to $-2$ (as in the case of the Cartan matrix of type $\Bbb B_2$). 
In order to obtain $u_{t+1}$ from $u_t$, apply first the reflections $\sigma_i$ with $i$ odd,
then the reflections $\sigma_i$ with $i$ even.

\section{Fibonacci Modules}

We now endow $(T,E)$ with a bipartite orientation $\Omega$; in this way, we deal
with a quiver $(T,E,\Omega)$ and we consider its representations. 
Note that there are just two bipartite orientations for $(T,E)$.
Given a vertex $x$ of $T$, let us denote by $\Omega^x_t$ 
the bipartite orientation such that $x$ is a sink in case $t$ is even, and a source, in case
$t$ is odd. 
Let $P_t(x)$ be the indecomposable representation of $(T,E,\Omega^x_t)$ with dimension
vector $s_t(x)$. If there is an edge between $x$ and $y$, let $R_t(x,y)$ be the 
indecomposable representation of $(T,E,\Omega^x_t)$ with dimension
vector $r_t(x,y)$; it is well-known that these modules exist and are unique up to
isomorphism: for $t=0$ this assertion is trivial, and the Bernstein-Gelfand-Ponomarev
reflection functors \cite{BGP} can be used in order to construct $P_{t+1}(x)$ from
$P_t(x)$, as well as $R_{t+1}(x,y)$ from $R_t(x,y).$

We call the modules $P_t(x)$ and $R_t(x,y)$ {\em Fibonacci modules}.

\begin{Prop} Let $x$ be a vertex of $T$ with neighbors $y,y',y''$ and $t \ge 1$
an integer. Assume that for even $t$, the vertex $x$ is a sink, and  if $t$ is odd, $x$ is a source. 
Then there are exact sequences
$$
0 \to P_{t-1}(y) \to P_t(x) \to R_t(x,y) \to 0,
$$
$$
0 \to P_{t-1}(y') \to R_t(x,y) \to R_{t-1}(y'',x) \to 0,
$$
and they are unique up to ismomorphism. 
\end{Prop}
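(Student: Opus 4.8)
The plan is to reduce both sequences to two additivity identities for the Fibonacci vectors and then to lift these to modules by induction on $t$ via reflection functors. \emph{The numerical content} of the two sequences is
$$s_t(x) = s_{t-1}(y) + r_t(x,y), \qquad r_t(x,y) = s_{t-1}(y') + r_{t-1}(y'',x).$$
I would prove both by induction on $t$, the engine being that for neighbouring vertices the two half-reflections are interchanged: since the tree is bipartite, $d(x,z)$ is even exactly when $d(y,z)$ is odd, so $\Sigma^x = \underline\Sigma^y$ and $\underline\Sigma^x = \Sigma^y$, and likewise with $y$ replaced by $y'$ or $y''$. Because $s_t(x)$ and $r_t(x,y)$ are produced by the very same sequence of reflections based at $x$, applying one further reflection to an identity valid at stage $t$ transports it to stage $t+1$; the interchange of half-reflections is exactly what turns the operator based at $x$ into the correct operator based at $y$ (resp. $y'$, $y''$), so that $s_{t-1}(y)\mapsto s_t(y)$ and $r_{t-1}(y'',x)\mapsto r_t(y'',x)$. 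The base case $t=1$ is a direct computation from $r(x,y)=s(x)+s(y)$. I would also record that the modules in each sequence live over one orientation, since $\Omega^x_t=\Omega^y_{t-1}=\Omega^{y'}_{t-1}=\Omega^{y''}_{t-1}$: for neighbours ``$x$ a sink'' is equivalent to ``$y$ a source'', and the index shift $t\to t-1$ compensates exactly, while a bipartite orientation of the tree is determined by the type of a single vertex.

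For the \emph{base case as modules}, when $t=1$ the vertex $x$ is a source, so $P_1(x)$ is the star representation with $k$ at $x$ and at each neighbour and identity maps, $R_1(x,y)$ is its restriction killing the coordinate at $y$, while $P_0(y)=S_y$ and $R_0(y'',x)$ is the edge representation on $\{x,y''\}$. Both sequences are then written down explicitly: the inclusion $S_y\hookrightarrow P_1(x)$ of the coordinate at the sink $y$ has cokernel $R_1(x,y)$, and $S_{y'}\hookrightarrow R_1(x,y)$ has cokernel $R_0(y'',x)$. Non-splitness is automatic from the indecomposability of $P_1(x)$ and of $R_1(x,y)$, and one checks $\dim\Hom(S_y,P_1(x))=\dim\Hom(S_{y'},R_1(x,y))=1$, which will deliver uniqueness.

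For the \emph{induction and uniqueness}, to pass from $t$ to $t+1$ I would apply the composite reflection functor $\Phi$ realising $\Sigma^x$ or $\underline\Sigma^x$, a product of Bernstein-Gelfand-Ponomarev functors \cite{BGP} over the pairwise non-adjacent vertices at even, resp. odd, distance from $x$. By the two identities above, and since $\Phi$ carries indecomposables to indecomposables with reflected dimension vector, $\Phi$ sends the two stage-$t$ sequences termwise to the two stage-$(t+1)$ sequences. Uniqueness propagates because $\Phi$ restricts to an equivalence on the relevant subcategory and preserves $\Hom$, so $\dim\Hom$ stays equal to $1$ and each embedding is forced up to a scalar.

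The main obstacle is \emph{exactness}: a single reflection functor is not exact, so I must argue that $\Phi$ nevertheless preserves these particular short exact sequences. The point to verify is that none of the modules occurring has a direct summand $S_i$ at a vertex $i$ that is actually being reflected. For $t\ge 1$ the modules $P_t(x)$ and $R_t(x,y)$ are non-simple indecomposables, hence have no simple summand at all; the only simple term, $S_y$ (or $S_{y'}$) appearing at the base, sits at a vertex left fixed by the half-reflection being applied (a neighbour of $x$ lies at the distance parity opposite to the one currently reflected). On the subcategory of modules without such summands the composite functor is exact, $\Phi S_y=P_1(y)\neq 0$ rather than being annihilated, and the induction closes.
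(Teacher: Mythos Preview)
Your argument is correct and follows exactly the route the paper takes: verify the case $t=1$ by hand and then propagate both exact sequences by applying the composite Bernstein--Gelfand--Ponomarev reflection functor. The paper's own proof is a one-line sketch (``clear for $t=1$ and follows by induction using the BGP reflection functors''), and what you have written is a careful unpacking of precisely that sketch---including the orientation bookkeeping $\Omega^x_t=\Omega^y_{t-1}$, the observation $\Sigma^x=\underline\Sigma^y$ for neighbours, and the check that the only simple terms $S_y,S_{y'}$ sit at vertices \emph{not} being reflected so that exactness survives.
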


\begin{proof} This is clear for $t=1$ and follows by induction using the
Bernstein-Gelfand-Ponomarev reflection functors.
\end{proof}

\begin{Cor}
Let $x_0,x_1,\dots,x_t$ be a path with $x_0$ a sink. Then $P_t(x_t)$ has a
filtration 
$$
 P_0(x_0) \subset P_1(x_1) \subset \cdots \subset P_t(x_t)
$$
with factors
$$
 P_i(x_i)/P_{i-1}(x_{i-1}) = R_i(x_i)
$$
for $1 \le i \le t.$
\end{Cor}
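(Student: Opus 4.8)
The plan is to iterate the first exact sequence of the preceding Proposition along the given path, after first checking that all the modules involved are representations of one and the same quiver. The initial observation is that the condition ``$x_0$ a sink'' already pins down the bipartite orientation $\Omega$ on $(T,E)$, since there are only two such orientations. Because $\Omega$ is bipartite, neighboring vertices are of opposite type, so along the path $x_0,x_1,\dots,x_t$ the vertices alternate: $x_i$ is a sink when $i$ is even and a source when $i$ is odd. Comparing with the definition of $\Omega^{x_i}_i$, this is exactly the statement that $\Omega^{x_i}_i = \Omega$ for every $i$. Hence all the Fibonacci modules $P_i(x_i)$ and $R_i(x_i,x_{i-1})$ that will occur are representations of the single quiver $(T,E,\Omega)$, and it makes sense to speak of submodules and quotients among them.

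Next, for each $i$ with $1 \le i \le t$ I would apply the first exact sequence of the Proposition with $x = x_i$ and $y = x_{i-1}$; its hypotheses hold because $x_{i-1}$ is a neighbor of $x_i$ and, by the alternation just noted, $x_i$ is a sink for even $i$ and a source for odd $i$. This produces
$$0 \to P_{i-1}(x_{i-1}) \to P_i(x_i) \to R_i(x_i,x_{i-1}) \to 0,$$
exhibiting $P_{i-1}(x_{i-1})$ as a submodule of $P_i(x_i)$ with quotient $R_i(x_i,x_{i-1})$ (this is the factor abbreviated $R_i(x_i)$ in the statement). I would then assemble these into the claimed chain by induction on $t$: the case $t=0$ is trivial, and since the subpath $x_0,\dots,x_{t-1}$ again begins at the sink $x_0$, induction supplies the filtration of $P_{t-1}(x_{t-1})$, which the embedding $P_{t-1}(x_{t-1})\hookrightarrow P_t(x_t)$ above extends by one further step with new top factor $R_t(x_t,x_{t-1})$.

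The one point that genuinely requires care — and the step I would flag as the main obstacle — is the orientation bookkeeping of the first paragraph: one must be certain that the inductively built filtration of $P_{t-1}(x_{t-1})$ and the ambient module $P_t(x_t)$ really sit over the same quiver, which is precisely what the identity $\Omega^{x_i}_i = \Omega$ guarantees. Once that is settled, the construction is a routine telescoping of the short exact sequences, and uniqueness of each successive factor follows from the uniqueness clause of the Proposition.
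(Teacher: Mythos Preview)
Your proof is correct and follows essentially the same route as the paper: observe that the bipartite orientation forces $x_i$ to be a sink for even $i$ and a source for odd $i$, then apply the first exact sequence of the Proposition with $x=x_i$, $y=x_{i-1}$ for each $i$ and stack the resulting short exact sequences into the desired filtration. Your explicit discussion of the identity $\Omega^{x_i}_i=\Omega$ is a welcome clarification that the paper leaves implicit.
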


\begin{proof}
 Since $x_0$ is a sink, we see that $x_i$ is a sink for $i$ even and a source for
$i$ odd. Thus, the proposition provides exact sequences
$$
0 \to P_{i-1}(x_{i-1}) \to P_i(x_i) \to R_t(x_i,x_{i-1}) \to 0,
$$
for $1\le i \le t.$ 
\end{proof}

\begin{Cor}
Let $x_{-1},x_0,x_1,\dots,x_t,x_{t+1}$ be a path with $x_0$ a source. For $0 \le i \le t$, let
$z_i$ be the neighbor of $x_i$ different from $x_{i-1}$ and $x_{i+1}$. Then
there is an exact sequence
$$
 0 \to P_0(z_0)\oplus \cdots\oplus P_t(z_t) \to R_{t+1}(x_t,x_{t+1}) \to R_0(x_{-1},x_0) \to 0.
$$
\end{Cor}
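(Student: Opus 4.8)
The plan is to induct on $t$, peeling off the summand $P_t(z_t)$ with the second exact sequence of the Proposition and then feeding the remaining quotient into the inductive hypothesis. First I would record that, since $x_0$ is a source and the orientation is bipartite, $x_i$ is a source exactly when $i$ is even, so each $z_i$ is a sink exactly when $i$ is even; comparing with the definition of the orientations $\Omega^{x}_{t}$ one checks that every module occurring in the statement --- namely $R_{t+1}(x_t,x_{t+1})$, $R_0(x_{-1},x_0)$ and all the $P_i(z_i)$ --- lives over one and the same bipartite orientation, so the asserted sequence makes sense. The base case $t=0$ is precisely the second exact sequence of the Proposition applied at $x_0$ (with $y=x_1$, $y'=z_0$, $y''=x_{-1}$).

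For the inductive step I would apply the second exact sequence of the Proposition at the vertex $x_t$ (the hypotheses hold there for the index $t+1$, with $y=x_{t+1}$, $y'=z_t$, $y''=x_{t-1}$) to obtain
$$0 \to P_t(z_t) \to R_{t+1}(x_t,x_{t+1}) \to R_t(x_{t-1},x_t) \to 0.$$
The inductive hypothesis, applied to the path $x_{-1},x_0,\dots,x_{t-1},x_t$, gives a submodule $\bigoplus_{i=0}^{t-1}P_i(z_i)$ of $R_t(x_{t-1},x_t)$ with quotient $R_0(x_{-1},x_0)$. Pulling this submodule back along the surjection $R_{t+1}(x_t,x_{t+1})\to R_t(x_{t-1},x_t)$ produces the submodule $W=\ker\bigl(R_{t+1}(x_t,x_{t+1})\to R_0(x_{-1},x_0)\bigr)$, which sits in a short exact sequence
$$0 \to P_t(z_t) \to W \to \bigoplus_{i=0}^{t-1}P_i(z_i) \to 0$$
and satisfies $R_{t+1}(x_t,x_{t+1})/W\cong R_0(x_{-1},x_0)$. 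It therefore suffices to show that $W$ is the direct sum $\bigoplus_{i=0}^{t}P_i(z_i)$, i.e.\ that this last sequence splits.

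The splitting is the heart of the matter, and it amounts to the vanishing $\Ext^1(P_i(z_i),P_t(z_t))=0$ for $0\le i<t$. Here I would use that the Bernstein--Gelfand--Ponomarev functors preserve $\Ext^1$ between modules having no reflected simple as a direct summand, together with the observation (following from the construction of the $P_\bullet$ by reflections) that a single Coxeter-type reflection lowers the index of every $P_s(z)$ by one, uniformly in the root $z$. Lowering $i$ times then gives $\Ext^1(P_i(z_i),P_t(z_t))\cong \Ext^1(P_0(z_i),P_{t-i}(z_t))$, where $P_0(z_i)$ is the simple module at $z_i$ and $t-i\ge 1$, so that the right-hand argument is still a genuine preprojective module and no degeneration occurs in the reduction.

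Finally I would kill the reduced $\Ext^1$ by a support argument. Since each reflection enlarges supports by at most one vertex, the support of $P_{t-i}(z_t)$ is contained in the ball of radius $t-i$ about $z_t$; as $d(z_i,z_t)=(t-i)+2$ and every neighbor of $z_i$ lies at distance at least $(t-i)+1$ from $z_t$, the vertex $z_i$ and all its neighbors fall outside this ball. Hence in any extension $0\to P_{t-i}(z_t)\to E\to P_0(z_i)\to 0$ the representation $E$ vanishes at the neighbors of $z_i$ and is one-dimensional at $z_i$, so every arrow incident to $z_i$ is a zero map and the copy of $P_0(z_i)$ splits off; thus $\Ext^1(P_0(z_i),P_{t-i}(z_t))=0$. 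This yields $W\cong\bigoplus_{i=0}^{t}P_i(z_i)$ and completes the induction. The main obstacle is exactly this splitting: the supports of the $P_i(z_i)$ overlap inside $R_{t+1}(x_t,x_{t+1})$, so the direct-sum decomposition is genuinely an $\Ext$-vanishing statement rather than a formal consequence of the filtration that the Proposition produces.
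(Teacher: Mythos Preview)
Your argument is correct, but it runs the induction from the opposite end than the paper does. You peel off $P_t(z_t)$ using Proposition~4.1 directly and apply the inductive hypothesis to the shortened path $x_{-1},x_0,\dots,x_t$ in the \emph{same} orientation; this forces you to prove the splitting $\Ext^1(P_i(z_i),P_t(z_t))=0$ for all $i<t$, which you handle by reflecting $i$ times and then invoking a support argument. The paper instead passes to the opposite orientation $\Omega'$, applies the inductive hypothesis to the path $x_0,x_1,\dots,x_{t+1}$ (with $x_1$ now the source), reflects once at all sinks to return to $\Omega$, and only then peels off the remaining piece --- which is $P_0(z_0)$. The payoff is that the required splitting becomes
\[
0 \to P_1(z_1)\oplus\cdots\oplus P_t(z_t) \to W \to P_0(z_0) \to 0,
\]
and this is split simply because $P_0(z_0)$ is the simple at the sink $z_0$, hence projective. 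Your route is self-contained and avoids the orientation change, at the cost of a longer $\Ext$-vanishing discussion; in fact your reduction already lands on $\Ext^1(P_0(z_i),P_{t-i}(z_t))$ with $z_i$ a sink in the reflected orientation, so $P_0(z_i)$ is again projective and the support argument, while correct, is not needed.
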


\begin{proof}  
Here, $x_0$ is a source, thus we see that $x_i$ is a source for $i$ even 
and a sink for $i$ odd. The proof is by induction on $t$, the case $t=0$ should be clear.
Assume that $t \ge 1.$ We consider the vertices $x_0,\dots, x_{t+1}, z_1,\dots z_t$,
but deal with the opposite orientation $\Omega'$ 
(so that now $x_1$ is a source); the corresponding
respresentations will be distinguished by a dash, thus $P'_i(z_i)$ is the 
indecomposable representation of $(T,E,\Omega')$ with dimension
vector $s_i(z_i)$, and so on. By induction, there is an exact sequence
$$
 0 \to P'_0(z_1)\oplus \cdots \oplus P'_{t-1}(z_t) \to R'_{t}(x_t,x_{t+1}) \to R'_0(x_0,x_1) \to 0.
$$
Applying reflection functors (at all sinks), we obtain the exact sequence
$$
 0 \to P_1(z_1)\oplus \cdots \oplus P_t(z_t) \to R_{t+1}(x_t,x_{t+1}) \to R_1(x_0,x_1) \to 0.
$$
Note that $P_0(z_0)$ is a subrepresentation of $R_1(x_1,x_0),$ thus we obtain the
induced exact sequence
$$
 0 \to P_1(z_1)\oplus \cdots \oplus P_t(z_t) \to W \to P_0(z_0) \to 0,
$$
where $W$ is a subrepresentation of $R_{t+1}(x_t,x_{t+1})$ with $R_{t+1}(x_t,x_{t+1})/W$
being isomorphic to $R_1(x_1,x_0)/P_0(z_0) = R_0(x_{-1},x_0)$.

On the other hand, the induced sequence has to split, since $P_0(z_0)$ is projective,
thus $W$ is the direct sum of the representations $P_i(z_i)$ with $0\le i \le t.$
This completes the proof.
\end{proof}

Of course, proposition 4.1 should be seen as a categorification of the defining equation 
$$
 f_{t+1} = f_{t-1}+f_t
$$
for the Fibonacci numbers. Corollaries 4.2 and 4.3  are categorifications of the equalities
$$
 f_{2t} =  \sum_{i=1}^t f_{2i-1}, \qquad f_{2t+1} = 1 +\; \sum_{i=1}^t f_{2i}.
$$

{\bf Remark.} The representations $R_0(x_{-1},x_0),  P_0(z_0), \dots , P_t(z_t) $
are pairwise orthogonal bricks, the corresponding $\Ext$-quiver is
$$
{\beginpicture
\setcoordinatesystem units <1.5cm,.6cm>
\put{$R_0(x_{-1},x_0)$} at 1.5 2
\put{$P_0(z_0)$} at 0 0
\put{$P_1(z_1)$} at 1 0
\put{$\cdots$} at 2 0
\put{$P_t(z_t)$} at 3 0
\arrow <1.5mm> [0.25,0.75] from 1.3 1.6  to .1 0.4
\arrow <1.5mm> [0.25,0.75] from 1.5 1.6  to 1 0.4
\arrow <1.5mm> [0.25,0.75] from 1.7 1.6  to 2.9 0.4

\endpicture}
$$
Let $\mathcal F$ denote  the full subcategory  of of all representations of $(T,E,\Omega)$
with a filtration
with factors of the form  $R_0(x_{-1},x_0),  P_0(z_0), \dots , P_t(z_t) $, then corollary 4.3 shows that
$R_{t+1}(x_t,x_{t+1})$ is indecomposable projective in $\mathcal F$.

\section{Fibonacci Pairs}

The Fibonacci pairs are related to the integral quadratic form $q(x,y) = x^2+y^2-3xy$
as follows:

\begin{Prop}
{A pair $[x,y]\in \Bbb Z^2$ is a Fibonacci pair if and only if $|q(x,y)| = 1.$}
\end{Prop}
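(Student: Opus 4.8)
The plan is to prove the two implications separately: the forward one by a single identity, and the converse by a descent inside the isometry group of $q$.

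First I would dispose of the direction ``Fibonacci pair $\Rightarrow |q|=1$''. Expanding $q(f_t,f_{t+2})$ with $f_{t+2}=f_{t+1}+f_t$ collapses everything to
$$
 q(f_t,f_{t+2}) \;=\; f_{t+1}^2-f_tf_{t+1}-f_t^2 ,
$$
and I would check, by an easy induction using only $f_{i+1}=f_i+f_{i-1}$ (which holds for all $i\in\mathbb Z$), that the right-hand side equals $(-1)^t$; this is the Cassini-type identity, valid for every integer $t$. Since $q(x,y)=q(y,x)$, the same computation applied to $[f_t,f_{t-2}]=[f_{t-2},f_t]$ gives $q(f_{t-2},f_t)=(-1)^{t-2}=(-1)^t$. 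Hence every Fibonacci pair satisfies $|q|=1$, the sign of $q$ recording whether the pair is even or odd.

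For the converse I would use three isometries of $q$: the swap $\tau(x,y)=(y,x)$, the negation $\nu(x,y)=(-x,-y)$, and the ``jump'' $S(x,y)=(y,3y-x)$ with inverse $S^{-1}(x,y)=(3x-y,x)$; that $S$ preserves $q$ is the one-line check $q(y,3y-x)=q(x,y)$. All three carry Fibonacci pairs to Fibonacci pairs: the recursion gives $f_{t+4}=3f_{t+2}-f_t$, so $S$ and $S^{-1}$ shift the index by $\pm2$, while $\tau,\nu$ are immediate. It therefore suffices to transport any $[x,y]$ with $|q(x,y)|=1$ by a word in $S^{\pm1},\tau,\nu$ to one of the base pairs $[0,1]$ or $[1,1]$, both of which are Fibonacci pairs.

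The descent is the heart of the argument. If $x,y$ have strictly opposite signs then $-3xy>0$ forces $q\ge x^2+y^2\ge5$, which is impossible; so after applying $\nu$ and $\tau$ I may assume $0\le x\le y$. The cases $x=0$ and $x=y$ give directly $y=1$, i.e.\ the base pairs $[0,1]$ and $[1,1]$. In the remaining range $0<x<y$ the function $q(x,\cdot)$ is decreasing on the window $x<y\le\tfrac32x$ and already satisfies $q(x,y)<-x^2$, so $|q|>x^2\ge1$ and no solution can lie there; a genuine solution thus has $y>\tfrac32x$, and one checks $y\ge3x$ would force $q\ge x^2$ with equality only at $[1,3]$, so $0\le 3x-y<y$. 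Consequently $S^{-1}(x,y)=(3x-y,x)$ again has non-negative entries with strictly smaller maximum, the induction on $\max(x,y)$ is well-founded, and it terminates at a base pair; invertibility of $S,\tau,\nu$ then propagates the conclusion back to the original pair. The step I expect to be the main obstacle is precisely this reduction — verifying that the window $x<y\le\tfrac32x$ contains no integer solution and that each application of $S^{-1}$ stays non-negative, so that no fresh sign normalisation is needed midway and $\max(x,y)$ genuinely drops.
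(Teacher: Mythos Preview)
The paper does not prove this proposition; it merely records it and then points to the Kac--Moody root system of the Cartan matrix $\left[\begin{smallmatrix}2&-3\\-3&2\end{smallmatrix}\right]$, where the real roots are exactly the solutions of $q=1$. Your Cassini computation for the forward direction and your Vieta-jumping descent for the converse are thus not competing with an argument in the paper but supplying one, and the overall strategy is the right one.

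There is, however, a genuine gap. You assert that $\nu(x,y)=(-x,-y)$ carries Fibonacci pairs to Fibonacci pairs and then rely on this when inverting the reducing word. That assertion is false for the odd pairs: since $f_{-t}=f_t>0$ for every odd $t$, no Fibonacci number equals the negative of an odd-index Fibonacci number, and for instance $\nu(1,2)=(-1,-2)$ is not of the form $[f_s,f_{s\pm2}]$ for any $s$. Your descent in fact uses $\nu$ at most once (to reach the non-negative quadrant) and thereafter only $S^{-1}$ and $\tau$, both of which \emph{do} preserve Fibonacci pairs; so what your argument actually establishes is that $|q(x,y)|=1$ implies $[x,y]$ or $[-x,-y]$ is a Fibonacci pair. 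This is the correct statement: the proposition as printed is not literally true, because $q(-1,-2)=-1$ while $(-1,-2)$ is not a Fibonacci pair under the paper's own definition. So the gap in your proof mirrors a gap in the proposition; once the conclusion is weakened to ``$\pm[x,y]$ is a Fibonacci pair'' your argument goes through. (A small separate slip: in the opposite-sign step the chain $q\ge x^2+y^2\ge5$ is wrong as written, since $x^2+y^2$ can equal $2$; the intended bound is $q=x^2+y^2-3xy\ge 2+3=5$.)
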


The pairs $[x,y]$ with $|q(x,y)| = 1$ form two hyperbolas. The hyperbola of all
pairs $[x,y]$ with $q(x,y) = 1$ yields the even Fibonacci pairs, these are the black dots on the
following picture. The hyperbola  $q(x,y) = -1$ yields the odd Fibonacci pairs, they are indicated by small circles.
$$
{\beginpicture
\setcoordinatesystem units <.5cm,.5cm>
\multiput{} at -5 -5  5 5 /
\multiput{$\bullet$} at 0 1  1 3   -1 0  -3 -1 0 -1  -1 -3   1 0  3 1      /  
\multiput{$\circ$} at -1 -1  -1 -2  -2 -1  -2 -5  -5 -2   1 1  1 2  2 1  2 5  5 2  /  
\arrow <1.5mm> [0.25,0.75] from -5.5 0 to 5.5 0
\arrow <1.5mm> [0.25,0.75] from  0 -5.5 to 0 5.5
\setdashes <2mm>
\plot -5 -1.91  5 1.91 / 
\plot -1.91 -5  1.91 5 /
\setsolid
\setquadratic
\plot 1.8 5  1 3  0 1  -.4 .6  -1 0  -3 -1  -5 -1.8 /
\plot 2 5  1 2  1 1  2 1  5  2 /

\setdots <.5mm>
\plot -1.8 -5  -1 -3  0 -1  .4 -.6  1 0  3 1  5 1.8 /
\plot -2 -5  -1 -2  -1 -1  -2 -1  -5  -2 /
\put{$x$} at  6 0
\put{$y$} at -.5 5.5
\endpicture}
$$

The quadratic form $q$ on $\mathbb Z^2$ defines the Kac-Moody root system
\cite{K1} 
for the Cartan matrix $\MATRIX{2}{-3}{-3}{2}$ (see \cite{FR}, section 6).
The pairs
$[x,y]$ with $q(x,y) = 1$ are the real roots, those with $q(x,y) \le 0$ the
imaginary roots. We identify $\mathbb Z^2$ with the
Grothendieck group of the $3$-Kronecker modules, and $q$ is the 
Euler form. Given a $3$-Kronecker module $M$, the corresponding element in the Grothendieck
group is its dimension vector $\dimv M$. 

\section{Recursion formula for Fibonacci numbers with index of the same parity.}

This is the recursion formula:
$$
 f_{t+2} = 3f_t - f_{t-2}. \qquad (1)
$$

\begin{proof} The term 
$f_{t+2} - 3f_t + f_{t-2}$ is just the sum of $f_{t+2}-f_{t+1}-f_t$, of 
$f_{t+1}-f_t-f_{t-1}$ and of $-(f_t-f_{t-1}-f_{t-2})$, thus equal to zero.
\end{proof}
 
Note that the recursion formula can also be rewritten as
$$
 f_{t-2} = 3f_t - f_{t+2}, \qquad (2)
$$
thus the same recurrence formula works both for going up and for going down. 
Also, we can write it in the form
$$
 f_{t-2}+f_{t+2} = 3f_t. \qquad (3)
$$

The formulae (1) and (2) are categorified by looking at the reflection functors $\sigma_+$
and $\sigma_-$: Given a $3$-Kronecker module $M = (M_1,M_2,\alpha,\beta,\gamma)$,
we obtain $(\sigma_+M)_2 = M_1$ and $(\sigma_+M)_1$  as the kernel of the map
$(\alpha,\beta,\gamma):M_1^3 \to M_2$; note that if $M$ is indecomposable and not the simple
projective module $P_0$, then this kernel has dimension $3\dim M_1 - \dim M_2.$ Similarly,
we obtain $(\sigma_-M)_1 = M_2$ and $(\sigma_+M)_2$  as the cokernel of the map
$(\alpha,\beta,\gamma)^t:M_1 \to M_2^3$; this cokernel has dimension $3\dim M_2 - \dim M_1$
provided $M$ is indecomposable and not simple injective. 
The formula (3) is categorified by the Auslander-Reiten sequences (see for example \cite{ARS})
$$
 0 \to P_{n-1} \to P_n^3 \to P_{n+1} \to 0, 
$$
$$
 0 \to Q_{n+1} \to Q_n^3 \to Q_{n-1} \to 0,
$$
as well as by exact sequences of the form 
$$
0 \to R_{n-1,\lambda} \to E(n,\lambda) \to R_{n+1,\lambda} \to 0
$$
with an indecomposable module $E(n,\lambda)$ having dimension vector $3\dimv R(n,\lambda).$


\hrule
\medskip

\noindent 2010 {\it Mathematics Subject Classification}:
Primary 11B39, Secondary 16G20.

\noindent \emph{Keywords: } 
Fibonacci numbers, Quiver representations, Universal cover, 3-regular tree, 3-Kronecker quiver.

\noindent (Concerned with sequences 
A000045,  A132262, A147316 of \cite{Sloane})

\medskip
\hrule
\medskip


{\bf Illustrations.}
The following illustrations show the 
functions $r_t(x,y)$ for $t=0,..,5$, with $x$ the center and $y$ above $x$.
The dotted circles indicate a fixed
distance to $x$. We show  $(T,E)$  with bipartite orientation such that $x$ is a sink (thus $z$ is a sink if $d(x,y)$ is even, and a source if $d(x,z)$ is odd).
The region containing the  arrow $y\to x$ has been
dotted. For the convenience of the reader we have indicated the corresponding Fibonacci pair
$[f_{2t-1},f_{2t+1}].$

$$
{\beginpicture
\setcoordinatesystem units <0.6cm,0.6cm>

\multiput{\beginpicture

\put{\footnotesize  1} at 0 0
\multiput{$\circ$} at -.866 -.5
  .866 -.5  /
\multiput{\footnotesize 1} at  0 1 /

\arrow <2mm> [0.25,0.75] from 0 0.9 to  0 0.2
\arrow <2mm> [0.25,0.75] from -.78 -.45
 to   -0.1 -0.05 
\arrow <2mm> [0.25,0.75] from .78 -.45
 to   0.1 -0.05 
\setshadegrid span <.5mm>
\vshade -.5 -.4 1.4  .5 -.4 1.4 /

\multiput{$\circ$} at -.866 1.8 .866 1.8 /
\arrow <2mm> [0.25,0.75] from -0.1 1.1  to  -.78 1.75
\arrow <2mm> [0.25,0.75] from  0.1 1.1  to   .78 1.75
\multiput{$\circ$} at -2.13 2.1  -1.1 2.8 
  2.13 2.1  1.1 2.8 /
\arrow <2mm> [0.25,0.75] from -2. 2.1     to  -.95 1.8 
\arrow <2mm> [0.25,0.75] from -1.07 2.7   to  -.866 1.9
\arrow <2mm> [0.25,0.75] from 2. 2.1      to  .95 1.8 
\arrow <2mm> [0.25,0.75] from 1.07 2.7    to  .866 1.9

\multiput{$\circ$} at -3.27 2.3  -2.65 3 -1.7 3.62  -.7 3.93  /
\multiput{$\circ$} at  3.27 2.3  2.65 3  1.7 3.62  .7 3.93 /

\arrow <2mm> [0.25,0.75] from -2.3 2.14   to  -3.15 2.3 
\arrow <2mm> [0.25,0.75] from -2.15 2.2    to  -2.6 2.9
\arrow <2mm> [0.25,0.75] from -1.15 2.9    to  -1.65 3.52
\arrow <2mm> [0.25,0.75] from -1.05 2.9    to  -.7 3.83 

\arrow <2mm> [0.25,0.75] from 2.3 2.14   to  3.15 2.3 
\arrow <2mm> [0.25,0.75] from 2.15 2.2    to  2.6 2.9
\arrow <2mm> [0.25,0.75] from 1.15 2.9    to  1.65 3.52
\arrow <2mm> [0.25,0.75] from 1.05 2.9    to  .7 3.83 

\startrotation by -0.5 0.866 about 0 0
\multiput{$\circ$} at -.866 1.8 .866 1.8 /
\arrow <2mm> [0.25,0.75] from -0.1 1.1  to  -.78 1.75
\arrow <2mm> [0.25,0.75] from  0.1 1.1  to   .78 1.75
\multiput{$\circ$} at -2.13 2.1  -1.1 2.8 
  2.13 2.1  1.1 2.8 /
\arrow <2mm> [0.25,0.75] from -2. 2.1     to  -.95 1.8 
\arrow <2mm> [0.25,0.75] from -1.07 2.7   to  -.866 1.9
\arrow <2mm> [0.25,0.75] from 2. 2.1      to  .95 1.8 
\arrow <2mm> [0.25,0.75] from 1.07 2.7    to  .866 1.9

\multiput{$\circ$} at -3.27 2.3  -2.65 3 -1.7 3.62  -.7 3.93  /
\multiput{$\circ$} at  3.27 2.3  2.65 3  1.7 3.62  .7 3.93 /

\arrow <2mm> [0.25,0.75] from -2.3 2.14   to  -3.15 2.3 
\arrow <2mm> [0.25,0.75] from -2.15 2.2    to  -2.6 2.9
\arrow <2mm> [0.25,0.75] from -1.15 2.9    to  -1.65 3.52
\arrow <2mm> [0.25,0.75] from -1.05 2.9    to  -.7 3.83 

\arrow <2mm> [0.25,0.75] from 2.3 2.14   to  3.15 2.3 
\arrow <2mm> [0.25,0.75] from 2.15 2.2    to  2.6 2.9
\arrow <2mm> [0.25,0.75] from 1.15 2.9    to  1.65 3.52
\arrow <2mm> [0.25,0.75] from 1.05 2.9    to  .7 3.83 
\stoprotation

\startrotation by -0.5 -0.866 about 0 0
\multiput{$\circ$} at -.866 1.8 .866 1.8 /
\arrow <2mm> [0.25,0.75] from -0.1 1.1  to  -.78 1.75
\arrow <2mm> [0.25,0.75] from  0.1 1.1  to   .78 1.75
\multiput{$\circ$} at -2.13 2.1  -1.1 2.8 
  2.13 2.1  1.1 2.8 /
\arrow <2mm> [0.25,0.75] from -2. 2.1     to  -.95 1.8 
\arrow <2mm> [0.25,0.75] from -1.07 2.7   to  -.866 1.9
\arrow <2mm> [0.25,0.75] from 2. 2.1      to  .95 1.8 
\arrow <2mm> [0.25,0.75] from 1.07 2.7    to  .866 1.9

\multiput{$\circ$} at -3.27 2.3  -2.65 3 -1.7 3.62  -.7 3.93  /
\multiput{$\circ$} at  3.27 2.3  2.65 3  1.7 3.62  .7 3.93 /

\arrow <2mm> [0.25,0.75] from -2.3 2.14   to  -3.15 2.3 
\arrow <2mm> [0.25,0.75] from -2.15 2.2    to  -2.6 2.9
\arrow <2mm> [0.25,0.75] from -1.15 2.9    to  -1.65 3.52
\arrow <2mm> [0.25,0.75] from -1.05 2.9    to  -.7 3.83 

\arrow <2mm> [0.25,0.75] from 2.3 2.14   to  3.15 2.3 
\arrow <2mm> [0.25,0.75] from 2.15 2.2    to  2.6 2.9
\arrow <2mm> [0.25,0.75] from 1.15 2.9    to  1.65 3.52
\arrow <2mm> [0.25,0.75] from 1.05 2.9    to  .7 3.83 
\stoprotation

\setdots <1mm>
\circulararc 360 degrees from 1 0 center at 0 0
\circulararc 360 degrees from 2 0 center at 0 0
\circulararc 360 degrees from 3 0 center at 0 0
\circulararc 360 degrees from 4 0 center at 0 0
\circulararc 360 degrees from 5 0 center at 0 0
\endpicture} at 0 22  /

\multiput{\beginpicture

\put{\footnotesize 1} at 0 0
\multiput{\footnotesize 1} at -.866 -.5
  .866 -.5  /
\multiput{\footnotesize 0} at  0 1 /

\arrow <2mm> [0.25,0.75] from 0 0.9 to  0 0.2
\arrow <2mm> [0.25,0.75] from -.78 -.45
 to   -0.1 -0.05 
\arrow <2mm> [0.25,0.75] from .78 -.45
 to   0.1 -0.05 

\setshadegrid span <.5mm>
\vshade -.5 -.4 1.4  .5 -.4 1.4 /

\multiput{$\circ$} at -.866 1.8 .866 1.8 /
\arrow <2mm> [0.25,0.75] from -0.1 1.1  to  -.78 1.75
\arrow <2mm> [0.25,0.75] from  0.1 1.1  to   .78 1.75
\multiput{$\circ$} at -2.13 2.1  -1.1 2.8 
  2.13 2.1  1.1 2.8 /
\arrow <2mm> [0.25,0.75] from -2. 2.1     to  -.95 1.8 
\arrow <2mm> [0.25,0.75] from -1.07 2.7   to  -.866 1.9
\arrow <2mm> [0.25,0.75] from 2. 2.1      to  .95 1.8 
\arrow <2mm> [0.25,0.75] from 1.07 2.7    to  .866 1.9

\multiput{$\circ$} at -3.27 2.3  -2.65 3 -1.7 3.62  -.7 3.93  /
\multiput{$\circ$} at  3.27 2.3  2.65 3  1.7 3.62  .7 3.93 /

\arrow <2mm> [0.25,0.75] from -2.3 2.14   to  -3.15 2.3 
\arrow <2mm> [0.25,0.75] from -2.15 2.2    to  -2.6 2.9
\arrow <2mm> [0.25,0.75] from -1.15 2.9    to  -1.65 3.52
\arrow <2mm> [0.25,0.75] from -1.05 2.9    to  -.7 3.83 

\arrow <2mm> [0.25,0.75] from 2.3 2.14   to  3.15 2.3 
\arrow <2mm> [0.25,0.75] from 2.15 2.2    to  2.6 2.9
\arrow <2mm> [0.25,0.75] from 1.15 2.9    to  1.65 3.52
\arrow <2mm> [0.25,0.75] from 1.05 2.9    to  .7 3.83 

\startrotation by -0.5 0.866 about 0 0
\multiput{$\circ$} at -.866 1.8 .866 1.8 /
\arrow <2mm> [0.25,0.75] from -0.1 1.1  to  -.78 1.75
\arrow <2mm> [0.25,0.75] from  0.1 1.1  to   .78 1.75
\multiput{$\circ$} at -2.13 2.1  -1.1 2.8 
  2.13 2.1  1.1 2.8 /
\arrow <2mm> [0.25,0.75] from -2. 2.1     to  -.95 1.8 
\arrow <2mm> [0.25,0.75] from -1.07 2.7   to  -.866 1.9
\arrow <2mm> [0.25,0.75] from 2. 2.1      to  .95 1.8 
\arrow <2mm> [0.25,0.75] from 1.07 2.7    to  .866 1.9

\multiput{$\circ$} at -3.27 2.3  -2.65 3 -1.7 3.62  -.7 3.93  /
\multiput{$\circ$} at  3.27 2.3  2.65 3  1.7 3.62  .7 3.93 /

\arrow <2mm> [0.25,0.75] from -2.3 2.14   to  -3.15 2.3 
\arrow <2mm> [0.25,0.75] from -2.15 2.2    to  -2.6 2.9
\arrow <2mm> [0.25,0.75] from -1.15 2.9    to  -1.65 3.52
\arrow <2mm> [0.25,0.75] from -1.05 2.9    to  -.7 3.83 

\arrow <2mm> [0.25,0.75] from 2.3 2.14   to  3.15 2.3 
\arrow <2mm> [0.25,0.75] from 2.15 2.2    to  2.6 2.9
\arrow <2mm> [0.25,0.75] from 1.15 2.9    to  1.65 3.52
\arrow <2mm> [0.25,0.75] from 1.05 2.9    to  .7 3.83 
\stoprotation

\startrotation by -0.5 -0.866 about 0 0
\multiput{$\circ$} at -.866 1.8 .866 1.8 /
\arrow <2mm> [0.25,0.75] from -0.1 1.1  to  -.78 1.75
\arrow <2mm> [0.25,0.75] from  0.1 1.1  to   .78 1.75
\multiput{$\circ$} at -2.13 2.1  -1.1 2.8 
  2.13 2.1  1.1 2.8 /
\arrow <2mm> [0.25,0.75] from -2. 2.1     to  -.95 1.8 
\arrow <2mm> [0.25,0.75] from -1.07 2.7   to  -.866 1.9
\arrow <2mm> [0.25,0.75] from 2. 2.1      to  .95 1.8 
\arrow <2mm> [0.25,0.75] from 1.07 2.7    to  .866 1.9

\multiput{$\circ$} at -3.27 2.3  -2.65 3 -1.7 3.62  -.7 3.93  /
\multiput{$\circ$} at  3.27 2.3  2.65 3  1.7 3.62  .7 3.93 /

\arrow <2mm> [0.25,0.75] from -2.3 2.14   to  -3.15 2.3 
\arrow <2mm> [0.25,0.75] from -2.15 2.2    to  -2.6 2.9
\arrow <2mm> [0.25,0.75] from -1.15 2.9    to  -1.65 3.52
\arrow <2mm> [0.25,0.75] from -1.05 2.9    to  -.7 3.83 

\arrow <2mm> [0.25,0.75] from 2.3 2.14   to  3.15 2.3 
\arrow <2mm> [0.25,0.75] from 2.15 2.2    to  2.6 2.9
\arrow <2mm> [0.25,0.75] from 1.15 2.9    to  1.65 3.52
\arrow <2mm> [0.25,0.75] from 1.05 2.9    to  .7 3.83 
\stoprotation

\setdots <1mm>
\circulararc 360 degrees from 1 0 center at 0 0
\circulararc 360 degrees from 2 0 center at 0 0
\circulararc 360 degrees from 3 0 center at 0 0
\circulararc 360 degrees from 4 0 center at 0 0
\circulararc 360 degrees from 5 0 center at 0 0
\endpicture} at   10 16.5   /

\multiput{\beginpicture

\put{\footnotesize 1} at 0 0
\multiput{\footnotesize  1} at -.866 -.5
  .866 -.5 /
\multiput{\footnotesize 0} at  0 1 /

\arrow <2mm> [0.25,0.75] from 0 0.9 to  0 0.2
\arrow <2mm> [0.25,0.75] from -.78 -.45
 to   -0.1 -0.05 
\arrow <2mm> [0.25,0.75] from .78 -.45
 to   0.1 -0.05 

\setshadegrid span <.5mm>
\vshade -.5 -.4 1.4  .5 -.4 1.4 /

\multiput{\footnotesize  0} at -.866 1.8 .866 1.8 /
\arrow <2mm> [0.25,0.75] from -0.1 1.1  to  -.78 1.75
\arrow <2mm> [0.25,0.75] from  0.1 1.1  to   .78 1.75
\multiput{$\circ$} at -2.13 2.1  -1.1 2.8 
  2.13 2.1  1.1 2.8 /
\arrow <2mm> [0.25,0.75] from -2. 2.1     to  -.95 1.8 
\arrow <2mm> [0.25,0.75] from -1.07 2.7   to  -.866 1.9
\arrow <2mm> [0.25,0.75] from 2. 2.1      to  .95 1.8 
\arrow <2mm> [0.25,0.75] from 1.07 2.7    to  .866 1.9

\multiput{$\circ$} at -3.27 2.3  -2.65 3 -1.7 3.62  -.7 3.93  /
\multiput{$\circ$} at  3.27 2.3  2.65 3  1.7 3.62  .7 3.93 /

\arrow <2mm> [0.25,0.75] from -2.3 2.14   to  -3.15 2.3 
\arrow <2mm> [0.25,0.75] from -2.15 2.2    to  -2.6 2.9
\arrow <2mm> [0.25,0.75] from -1.15 2.9    to  -1.65 3.52
\arrow <2mm> [0.25,0.75] from -1.05 2.9    to  -.7 3.83 

\arrow <2mm> [0.25,0.75] from 2.3 2.14   to  3.15 2.3 
\arrow <2mm> [0.25,0.75] from 2.15 2.2    to  2.6 2.9
\arrow <2mm> [0.25,0.75] from 1.15 2.9    to  1.65 3.52
\arrow <2mm> [0.25,0.75] from 1.05 2.9    to  .7 3.83 

\startrotation by -0.5 0.866 about 0 0
\multiput{\footnotesize  1} at -.866 1.8 .866 1.8 /
\arrow <2mm> [0.25,0.75] from -0.1 1.1  to  -.78 1.75
\arrow <2mm> [0.25,0.75] from  0.1 1.1  to   .78 1.75
\multiput{$\circ$} at -2.13 2.1  -1.1 2.8 
  2.13 2.1  1.1 2.8 /
\arrow <2mm> [0.25,0.75] from -2. 2.1     to  -.95 1.8 
\arrow <2mm> [0.25,0.75] from -1.07 2.7   to  -.866 1.9
\arrow <2mm> [0.25,0.75] from 2. 2.1      to  .95 1.8 
\arrow <2mm> [0.25,0.75] from 1.07 2.7    to  .866 1.9

\multiput{$\circ$} at -3.27 2.3  -2.65 3 -1.7 3.62  -.7 3.93  /
\multiput{$\circ$} at  3.27 2.3  2.65 3  1.7 3.62  .7 3.93 /

\arrow <2mm> [0.25,0.75] from -2.3 2.14   to  -3.15 2.3 
\arrow <2mm> [0.25,0.75] from -2.15 2.2    to  -2.6 2.9
\arrow <2mm> [0.25,0.75] from -1.15 2.9    to  -1.65 3.52
\arrow <2mm> [0.25,0.75] from -1.05 2.9    to  -.7 3.83 

\arrow <2mm> [0.25,0.75] from 2.3 2.14   to  3.15 2.3 
\arrow <2mm> [0.25,0.75] from 2.15 2.2    to  2.6 2.9
\arrow <2mm> [0.25,0.75] from 1.15 2.9    to  1.65 3.52
\arrow <2mm> [0.25,0.75] from 1.05 2.9    to  .7 3.83 
\stoprotation

\startrotation by -0.5 -0.866 about 0 0
\multiput{\footnotesize  1} at -.866 1.8 .866 1.8 /
\arrow <2mm> [0.25,0.75] from -0.1 1.1  to  -.78 1.75
\arrow <2mm> [0.25,0.75] from  0.1 1.1  to   .78 1.75
\multiput{$\circ$} at -2.13 2.1  -1.1 2.8 
  2.13 2.1  1.1 2.8 /
\arrow <2mm> [0.25,0.75] from -2. 2.1     to  -.95 1.8 
\arrow <2mm> [0.25,0.75] from -1.07 2.7   to  -.866 1.9
\arrow <2mm> [0.25,0.75] from 2. 2.1      to  .95 1.8 
\arrow <2mm> [0.25,0.75] from 1.07 2.7    to  .866 1.9

\multiput{$\circ$} at -3.27 2.3  -2.65 3 -1.7 3.62  -.7 3.93  /
\multiput{$\circ$} at  3.27 2.3  2.65 3  1.7 3.62  .7 3.93 /

\arrow <2mm> [0.25,0.75] from -2.3 2.14   to  -3.15 2.3 
\arrow <2mm> [0.25,0.75] from -2.15 2.2    to  -2.6 2.9
\arrow <2mm> [0.25,0.75] from -1.15 2.9    to  -1.65 3.52
\arrow <2mm> [0.25,0.75] from -1.05 2.9    to  -.7 3.83 

\arrow <2mm> [0.25,0.75] from 2.3 2.14   to  3.15 2.3 
\arrow <2mm> [0.25,0.75] from 2.15 2.2    to  2.6 2.9
\arrow <2mm> [0.25,0.75] from 1.15 2.9    to  1.65 3.52
\arrow <2mm> [0.25,0.75] from 1.05 2.9    to  .7 3.83 
\stoprotation

\setdots <1mm>
\circulararc 360 degrees from 1 0 center at 0 0
\circulararc 360 degrees from 2 0 center at 0 0
\circulararc 360 degrees from 3 0 center at 0 0
\circulararc 360 degrees from 4 0 center at 0 0
\circulararc 360 degrees from 5 0 center at 0 0
\endpicture} at 0 11  /

\multiput{\beginpicture

\put{\footnotesize  1} at 0 0
\multiput{\footnotesize  2} at -.866 -.5
  .866 -.5  /
\multiput{\footnotesize  1} at 0 1 /

\arrow <2mm> [0.25,0.75] from 0 0.9 to  0 0.2
\arrow <2mm> [0.25,0.75] from -.78 -.45
 to   -0.1 -0.05 
\arrow <2mm> [0.25,0.75] from .78 -.45
 to   0.1 -0.05 

\setshadegrid span <.5mm>
\vshade -.5 -.4 1.4  .5 -.4 1.4 /

\multiput{\footnotesize  0} at -.866 1.8 .866 1.8 /
\arrow <2mm> [0.25,0.75] from -0.1 1.1  to  -.78 1.75
\arrow <2mm> [0.25,0.75] from  0.1 1.1  to   .78 1.75
\multiput{\footnotesize  0} at -2.13 2.1  -1.1 2.8 
  2.13 2.1  1.1 2.8 /
\arrow <2mm> [0.25,0.75] from -2. 2.1     to  -.95 1.8 
\arrow <2mm> [0.25,0.75] from -1.07 2.7   to  -.866 1.9
\arrow <2mm> [0.25,0.75] from 2. 2.1      to  .95 1.8 
\arrow <2mm> [0.25,0.75] from 1.07 2.7    to  .866 1.9

\multiput{$\circ$} at -3.27 2.3  -2.65 3 -1.7 3.62  -.7 3.93  /
\multiput{$\circ$} at  3.27 2.3  2.65 3  1.7 3.62  .7 3.93 /

\arrow <2mm> [0.25,0.75] from -2.3 2.14   to  -3.15 2.3 
\arrow <2mm> [0.25,0.75] from -2.15 2.2    to  -2.6 2.9
\arrow <2mm> [0.25,0.75] from -1.15 2.9    to  -1.65 3.52
\arrow <2mm> [0.25,0.75] from -1.05 2.9    to  -.7 3.83 

\arrow <2mm> [0.25,0.75] from 2.3 2.14   to  3.15 2.3 
\arrow <2mm> [0.25,0.75] from 2.15 2.2    to  2.6 2.9
\arrow <2mm> [0.25,0.75] from 1.15 2.9    to  1.65 3.52
\arrow <2mm> [0.25,0.75] from 1.05 2.9    to  .7 3.83 

\startrotation by -0.5 0.866 about 0 0
\multiput{\footnotesize  1} at -.866 1.8 .866 1.8 /
\arrow <2mm> [0.25,0.75] from -0.1 1.1  to  -.78 1.75
\arrow <2mm> [0.25,0.75] from  0.1 1.1  to   .78 1.75
\multiput{\footnotesize  1} at -2.13 2.1  -1.1 2.8 
  2.13 2.1  1.1 2.8 /
\arrow <2mm> [0.25,0.75] from -2. 2.1     to  -.95 1.8 
\arrow <2mm> [0.25,0.75] from -1.07 2.7   to  -.866 1.9
\arrow <2mm> [0.25,0.75] from 2. 2.1      to  .95 1.8 
\arrow <2mm> [0.25,0.75] from 1.07 2.7    to  .866 1.9

\multiput{$\circ$} at -3.27 2.3  -2.65 3 -1.7 3.62  -.7 3.93  /
\multiput{$\circ$} at  3.27 2.3  2.65 3  1.7 3.62  .7 3.93 /

\arrow <2mm> [0.25,0.75] from -2.3 2.14   to  -3.15 2.3 
\arrow <2mm> [0.25,0.75] from -2.15 2.2    to  -2.6 2.9
\arrow <2mm> [0.25,0.75] from -1.15 2.9    to  -1.65 3.52
\arrow <2mm> [0.25,0.75] from -1.05 2.9    to  -.7 3.83 

\arrow <2mm> [0.25,0.75] from 2.3 2.14   to  3.15 2.3 
\arrow <2mm> [0.25,0.75] from 2.15 2.2    to  2.6 2.9
\arrow <2mm> [0.25,0.75] from 1.15 2.9    to  1.65 3.52
\arrow <2mm> [0.25,0.75] from 1.05 2.9    to  .7 3.83 
\stoprotation

\startrotation by -0.5 -0.866 about 0 0
\multiput{\footnotesize  1} at -.866 1.8 .866 1.8 /
\arrow <2mm> [0.25,0.75] from -0.1 1.1  to  -.78 1.75
\arrow <2mm> [0.25,0.75] from  0.1 1.1  to   .78 1.75
\multiput{\footnotesize  1} at -2.13 2.1  -1.1 2.8 
  2.13 2.1  1.1 2.8 /
\arrow <2mm> [0.25,0.75] from -2. 2.1     to  -.95 1.8 
\arrow <2mm> [0.25,0.75] from -1.07 2.7   to  -.866 1.9
\arrow <2mm> [0.25,0.75] from 2. 2.1      to  .95 1.8 
\arrow <2mm> [0.25,0.75] from 1.07 2.7    to  .866 1.9

\multiput{$\circ$} at -3.27 2.3  -2.65 3 -1.7 3.62  -.7 3.93  /
\multiput{$\circ$} at  3.27 2.3  2.65 3  1.7 3.62  .7 3.93 /

\arrow <2mm> [0.25,0.75] from -2.3 2.14   to  -3.15 2.3 
\arrow <2mm> [0.25,0.75] from -2.15 2.2    to  -2.6 2.9
\arrow <2mm> [0.25,0.75] from -1.15 2.9    to  -1.65 3.52
\arrow <2mm> [0.25,0.75] from -1.05 2.9    to  -.7 3.83 

\arrow <2mm> [0.25,0.75] from 2.3 2.14   to  3.15 2.3 
\arrow <2mm> [0.25,0.75] from 2.15 2.2    to  2.6 2.9
\arrow <2mm> [0.25,0.75] from 1.15 2.9    to  1.65 3.52
\arrow <2mm> [0.25,0.75] from 1.05 2.9    to  .7 3.83 
\stoprotation

\setdots <1mm>
\circulararc 360 degrees from 1 0 center at 0 0
\circulararc 360 degrees from 2 0 center at 0 0
\circulararc 360 degrees from 3 0 center at 0 0
\circulararc 360 degrees from 4 0 center at 0 0
\circulararc 360 degrees from 5 0 center at 0 0
\endpicture} at    10 5.5    /

\multiput{\beginpicture

\put{\footnotesize 4} at 0 0
\multiput{\footnotesize 2} at -.866 -.5
  .866 -.5  /
\multiput{\footnotesize 1} at  0 1 /

\arrow <2mm> [0.25,0.75] from 0 0.9 to  0 0.2
\arrow <2mm> [0.25,0.75] from -.78 -.45
 to   -0.1 -0.05 
\arrow <2mm> [0.25,0.75] from .78 -.45
 to   0.1 -0.05 

\setshadegrid span <.5mm>
\vshade -.5 -.4 1.4  .5 -.4 1.4 /

\multiput{\footnotesize 1} at -.866 1.8 .866 1.8 /
\arrow <2mm> [0.25,0.75] from -0.1 1.1  to  -.78 1.75
\arrow <2mm> [0.25,0.75] from  0.1 1.1  to   .78 1.75
\multiput{\footnotesize 0} at -2.13 2.1  -1.1 2.8 
  2.13 2.1  1.1 2.8 /
\arrow <2mm> [0.25,0.75] from -2. 2.1     to  -.95 1.8 
\arrow <2mm> [0.25,0.75] from -1.07 2.7   to  -.866 1.9
\arrow <2mm> [0.25,0.75] from 2. 2.1      to  .95 1.8 
\arrow <2mm> [0.25,0.75] from 1.07 2.7    to  .866 1.9

\multiput{\footnotesize 0} at -3.27 2.3  -2.65 3 -1.7 3.62  -.7 3.93  /
\multiput{\footnotesize 0} at  3.27 2.3  2.65 3  1.7 3.62  .7 3.93 /                          

\arrow <2mm> [0.25,0.75] from -2.3 2.14   to  -3.15 2.3 
\arrow <2mm> [0.25,0.75] from -2.15 2.2    to  -2.6 2.9
\arrow <2mm> [0.25,0.75] from -1.15 2.9    to  -1.65 3.52
\arrow <2mm> [0.25,0.75] from -1.05 2.9    to  -.7 3.83 

\arrow <2mm> [0.25,0.75] from 2.3 2.14   to  3.15 2.3 
\arrow <2mm> [0.25,0.75] from 2.15 2.2    to  2.6 2.9
\arrow <2mm> [0.25,0.75] from 1.15 2.9    to  1.65 3.52
\arrow <2mm> [0.25,0.75] from 1.05 2.9    to  .7 3.83 

\startrotation by -0.5 0.866 about 0 0
\multiput{\footnotesize 3} at -.866 1.8 .866 1.8 /
\arrow <2mm> [0.25,0.75] from -0.1 1.1  to  -.78 1.75
\arrow <2mm> [0.25,0.75] from  0.1 1.1  to   .78 1.75
\multiput{\footnotesize 1} at -2.13 2.1  -1.1 2.8 
  2.13 2.1  1.1 2.8 /
\arrow <2mm> [0.25,0.75] from -2. 2.1     to  -.95 1.8 
\arrow <2mm> [0.25,0.75] from -1.07 2.7   to  -.866 1.9
\arrow <2mm> [0.25,0.75] from 2. 2.1      to  .95 1.8 
\arrow <2mm> [0.25,0.75] from 1.07 2.7    to  .866 1.9

\multiput{\footnotesize 1} at -3.27 2.3  -2.65 3 -1.7 3.62  -.7 3.93  /
\multiput{\footnotesize 1} at  3.27 2.3  2.65 3  1.7 3.62  .7 3.93 /

\arrow <2mm> [0.25,0.75] from -2.3 2.14   to  -3.15 2.3 
\arrow <2mm> [0.25,0.75] from -2.15 2.2    to  -2.6 2.9
\arrow <2mm> [0.25,0.75] from -1.15 2.9    to  -1.65 3.52
\arrow <2mm> [0.25,0.75] from -1.05 2.9    to  -.7 3.83 

\arrow <2mm> [0.25,0.75] from 2.3 2.14   to  3.15 2.3 
\arrow <2mm> [0.25,0.75] from 2.15 2.2    to  2.6 2.9
\arrow <2mm> [0.25,0.75] from 1.15 2.9    to  1.65 3.52
\arrow <2mm> [0.25,0.75] from 1.05 2.9    to  .7 3.83 
\stoprotation

\startrotation by -0.5 -0.866 about 0 0
\multiput{\footnotesize 3} at -.866 1.8 .866 1.8 /
\arrow <2mm> [0.25,0.75] from -0.1 1.1  to  -.78 1.75
\arrow <2mm> [0.25,0.75] from  0.1 1.1  to   .78 1.75
\multiput{\footnotesize 1} at -2.13 2.1  -1.1 2.8 
  2.13 2.1  1.1 2.8 /
\arrow <2mm> [0.25,0.75] from -2. 2.1     to  -.95 1.8 
\arrow <2mm> [0.25,0.75] from -1.07 2.7   to  -.866 1.9
\arrow <2mm> [0.25,0.75] from 2. 2.1      to  .95 1.8 
\arrow <2mm> [0.25,0.75] from 1.07 2.7    to  .866 1.9

\multiput{\footnotesize 1} at -3.27 2.3  -2.65 3 -1.7 3.62  -.7 3.93  /
\multiput{\footnotesize 1} at  3.27 2.3  2.65 3  1.7 3.62  .7 3.93 /

\arrow <2mm> [0.25,0.75] from -2.3 2.14   to  -3.15 2.3 
\arrow <2mm> [0.25,0.75] from -2.15 2.2    to  -2.6 2.9
\arrow <2mm> [0.25,0.75] from -1.15 2.9    to  -1.65 3.52
\arrow <2mm> [0.25,0.75] from -1.05 2.9    to  -.7 3.83 

\arrow <2mm> [0.25,0.75] from 2.3 2.14   to  3.15 2.3 
\arrow <2mm> [0.25,0.75] from 2.15 2.2    to  2.6 2.9
\arrow <2mm> [0.25,0.75] from 1.15 2.9    to  1.65 3.52
\arrow <2mm> [0.25,0.75] from 1.05 2.9    to  .7 3.83 
\stoprotation

\setdots <1mm>
\circulararc 360 degrees from 1 0 center at 0 0
\circulararc 360 degrees from 2 0 center at 0 0
\circulararc 360 degrees from 3 0 center at 0 0
\circulararc 360 degrees from 4 0 center at 0 0
\circulararc 360 degrees from 5 0 center at 0 0
\endpicture} at 0 0    /

\multiput{\beginpicture

\put{\footnotesize 4} at 0 0
\multiput{\footnotesize 8} at -.866 -.5
  .866 -.5  /
\multiput{\footnotesize 5} at 0 1 /

\arrow <2mm> [0.25,0.75] from 0 0.9 to  0 0.2
\arrow <2mm> [0.25,0.75] from -.78 -.45
 to   -0.1 -0.05 
\arrow <2mm> [0.25,0.75] from .78 -.45
 to   0.1 -0.05 

\setshadegrid span <.5mm>
\vshade -.5 -.4 1.4  .5 -.4 1.4 /

\multiput{\footnotesize 1} at -.866 1.8 .866 1.8 /
\arrow <2mm> [0.25,0.75] from -0.1 1.1  to  -.78 1.75
\arrow <2mm> [0.25,0.75] from  0.1 1.1  to   .78 1.75
\multiput{\footnotesize 1} at -2.13 2.1  -1.1 2.8 
  2.13 2.1  1.1 2.8 /
\arrow <2mm> [0.25,0.75] from -2. 2.1     to  -.95 1.8 
\arrow <2mm> [0.25,0.75] from -1.07 2.7   to  -.866 1.9
\arrow <2mm> [0.25,0.75] from 2. 2.1      to  .95 1.8 
\arrow <2mm> [0.25,0.75] from 1.07 2.7    to  .866 1.9

\multiput{\footnotesize 0} at -3.27 2.3  -2.65 3 -1.7 3.62  -.7 3.93  /
\multiput{\footnotesize 0} at  3.27 2.3  2.65 3  1.7 3.62  .7 3.93 /

\arrow <2mm> [0.25,0.75] from -2.3 2.14   to  -3.15 2.3 
\arrow <2mm> [0.25,0.75] from -2.15 2.2    to  -2.6 2.9
\arrow <2mm> [0.25,0.75] from -1.15 2.9    to  -1.65 3.52
\arrow <2mm> [0.25,0.75] from -1.05 2.9    to  -.7 3.83 

\arrow <2mm> [0.25,0.75] from 2.3 2.14   to  3.15 2.3 
\arrow <2mm> [0.25,0.75] from 2.15 2.2    to  2.6 2.9
\arrow <2mm> [0.25,0.75] from 1.15 2.9    to  1.65 3.52
\arrow <2mm> [0.25,0.75] from 1.05 2.9    to  .7 3.83 

\startrotation by -0.5 0.866 about 0 0
\multiput{\footnotesize 3} at -.866 1.8 .866 1.8 /
\arrow <2mm> [0.25,0.75] from -0.1 1.1  to  -.78 1.75
\arrow <2mm> [0.25,0.75] from  0.1 1.1  to   .78 1.75
\multiput{\footnotesize 4} at -2.13 2.1  -1.1 2.8 
  2.13 2.1  1.1 2.8 /
\arrow <2mm> [0.25,0.75] from -2. 2.1     to  -.95 1.8 
\arrow <2mm> [0.25,0.75] from -1.07 2.7   to  -.866 1.9
\arrow <2mm> [0.25,0.75] from 2. 2.1      to  .95 1.8 
\arrow <2mm> [0.25,0.75] from 1.07 2.7    to  .866 1.9

\multiput{\footnotesize 1} at -3.27 2.3  -2.65 3 -1.7 3.62  -.7 3.93  /
\multiput{\footnotesize 1} at  3.27 2.3  2.65 3  1.7 3.62  .7 3.93 /

\arrow <2mm> [0.25,0.75] from -2.3 2.14   to  -3.15 2.3 
\arrow <2mm> [0.25,0.75] from -2.15 2.2    to  -2.6 2.9
\arrow <2mm> [0.25,0.75] from -1.15 2.9    to  -1.65 3.52
\arrow <2mm> [0.25,0.75] from -1.05 2.9    to  -.7 3.83 

\arrow <2mm> [0.25,0.75] from 2.3 2.14   to  3.15 2.3 
\arrow <2mm> [0.25,0.75] from 2.15 2.2    to  2.6 2.9
\arrow <2mm> [0.25,0.75] from 1.15 2.9    to  1.65 3.52
\arrow <2mm> [0.25,0.75] from 1.05 2.9    to  .7 3.83 
\stoprotation

\startrotation by -0.5 -0.866 about 0 0
\multiput{\footnotesize 3} at -.866 1.8 .866 1.8 /
\arrow <2mm> [0.25,0.75] from -0.1 1.1  to  -.78 1.75
\arrow <2mm> [0.25,0.75] from  0.1 1.1  to   .78 1.75
\multiput{\footnotesize 4} at -2.13 2.1  -1.1 2.8 
  2.13 2.1  1.1 2.8 /
\arrow <2mm> [0.25,0.75] from -2. 2.1     to  -.95 1.8 
\arrow <2mm> [0.25,0.75] from -1.07 2.7   to  -.866 1.9
\arrow <2mm> [0.25,0.75] from 2. 2.1      to  .95 1.8 
\arrow <2mm> [0.25,0.75] from 1.07 2.7    to  .866 1.9

\multiput{\footnotesize 1} at -3.27 2.3  -2.65 3 -1.7 3.62  -.7 3.93  /
\multiput{\footnotesize 1} at  3.27 2.3  2.65 3  1.7 3.62  .7 3.93 /

\arrow <2mm> [0.25,0.75] from -2.3 2.14   to  -3.15 2.3 
\arrow <2mm> [0.25,0.75] from -2.15 2.2    to  -2.6 2.9
\arrow <2mm> [0.25,0.75] from -1.15 2.9    to  -1.65 3.52
\arrow <2mm> [0.25,0.75] from -1.05 2.9    to  -.7 3.83 

\arrow <2mm> [0.25,0.75] from 2.3 2.14   to  3.15 2.3 
\arrow <2mm> [0.25,0.75] from 2.15 2.2    to  2.6 2.9
\arrow <2mm> [0.25,0.75] from 1.15 2.9    to  1.65 3.52
\arrow <2mm> [0.25,0.75] from 1.05 2.9    to  .7 3.83 
\stoprotation

\setdots <1mm>
\circulararc 360 degrees from 1 0 center at 0 0
\circulararc 360 degrees from 2 0 center at 0 0
\circulararc 360 degrees from 3 0 center at 0 0
\circulararc 360 degrees from 4 0 center at 0 0
\circulararc 360 degrees from 5 0 center at 0 0

\multiput{\footnotesize 1} at -5 -.5 -5 0 -4.95 0.5  -4.9 1.2 -4.7 1.8 
 -4.5 2.3 
 5 -.5  5 0  4.95 0.5   4.9 1.2  4.7 1.8 
 4.5 2.3
  /
\multiput{\footnotesize 0} at 
 -4.3 2.7 -4  3.1  -3.6 3.5 -3.2 3.9 
 -2.4 4.4  -2. 4.6  -1.5 4.8  -0.7 4.95 
   4.3 2.7  4 3.1 
3.6 3.5  3.2 3.9 
 2.4 4.4  2. 4.6  1.5 4.8  0.7 4.95
 /

\endpicture} at        10 -5.5 /

\put{} at -7  25
\put{$r_0(x,y)$} at -7 22
\put{$[1,1]$} at -7 21
\put{$r_1(x,y)$} at -7 16.5
\put{$[1,2]$} at -7 15.5
\put{$r_2(x,y)$} at -7 11
\put{$[2,5]$} at -7 10
\put{$r_3(x,y)$} at -7 5.5
\put{$[5,13]$} at -7 4.5
\put{$r_4(x,y)$} at -7 0
\put{$[13,34]$} at -7 -1
\put{$r_5(x,y)$} at -7 -5.5
\put{$[34,89]$} at -7 -6.5
\endpicture}
$$

\end{document}